\definecolor{green}{HTML}{2ECC71}
\definecolor{blue}{HTML}{3498DB}
\definecolor{red}{HTML}{E74C3C}
\definecolor{lstgreen}{rgb}{0,0.5,0}
\def\@endtheorem{\endtrivlist}
\Crefname{equation}{}{}
\newtheorem{theorem}{Theorem}
\Crefname{theorem}{Theorem}{Theorems}
\newtheorem{lemma}[theorem]{Lemma}
\Crefname{lemma}{Lemma}{Lemmas}
\newtheorem{proposition}[theorem]{Proposition}
\Crefname{proposition}{Proposition}{Propositions}
\newtheorem{remark}{Remark}
\Crefname{remark}{Remark}{Remarks}
\DeclareMathAlphabet{\pazocal}{OMS}{zplm}{m}{n}
\newcommand{\CA}{\mathcal{A}}
\newcommand{\CC}{\mathcal{C}}
\newcommand{\CL}{\mathcal{L}}
\newcommand{\CM}{\mathcal{M}}
\newcommand{\cE}{\pazocal{E}}
\newcommand{\gM}{\mathfrak{M}}
\newcommand{\gZ}{\mathfrak{Z}}
\newcommand{\dN}{\mathbb{N}}
\newcommand{\dR}{\mathbb{R}}
\newcommand{\dd}{\mathrm{d}}
\DeclareMathOperator{\esp}{\mathbf{E}}
\DeclareMathOperator{\dom}{\mathbb{D}om}
\DeclareMathOperator{\Gammaa}{\mathbf{\Gamma}}
\author{Ronan Herry}
\begin{document}
\title{A note on the carré du champ on the Poisson space}

\maketitle
\begin{abstract}
The goal of this short note is to establish, in complete generality, the representation for the carré du champ operator associated with the Ornstein-Uhlenbeck semi-group on the Poisson space in terms of the add-one and drop-one operators (see \cref{proposition:carre_du_champ} below).
\end{abstract}
\textbf{Keywords:} Carré du champ; Dirichlet forms; Poisson point process.\\
\textbf{MSC Classification:} 60G55; 60H07; 60J46.

\subsubsection*{Poisson setting}
We fix $(Z, \gZ)$ a measurable space equipped with a $\sigma$-finite measure $\nu$.
In particular, we do not make any topological assumptions on $(Z, \gZ)$.
We consider $\CM$ be the space of all countable sums of $\dN$-valued measures on $(Z, \gZ)$.
The space $\CM$ is endowed with the $\sigma$-algebra $\gM$, generated by the \emph{cylindrical mappings}
\begin{equation*}
  \xi \in \CM \mapsto \xi(B) \in \dN \cup \{\infty\},\quad B \in \gZ.
\end{equation*}
The Poisson point process with intensity $\nu$ is the only probability $\Pi$ on $\CM$ such that the \emph{Mecke equation} holds:
\begin{equation}\label{equation:Mecke}
  \int \int u(\eta, z) \eta(\dd z) \Pi(\dd \eta) = \int \int u(\eta + \delta_{z}, z) \Pi(\dd \eta) \nu(\dd z),
\end{equation}
for all measurable $u \colon \CM \times Z \to [0,\infty]$.
Poisson processes with $\sigma$-finite intensity exist \cite[Theorem 3.6]{LastPenrose}.
Note that, if, in the previous equation, $f$ is replaced by a measurable function with values in $\dR$, the previous formula still holds provided both sides of the identity are finite when we replace $f$ by $|f|$.
Integration with respect to $\Pi$ will also be denoted by the probabilistic notation $\esp_{\Pi}$.

\subsubsection*{The add and drop operators}
Given $z \in Z$ and $F \colon \CM \to \dR$ measurable, we let
\begin{align*}
  & D^{+}_{z} F(\eta) =  F(\eta + \delta_{z}) - F(\eta); \\
  & D^{-}_{z} F(\eta) =  (F(\eta) - F(\eta - \delta_{z})) 1_{z \in \eta}.
\end{align*}
The operator $D^{+}$ (resp.\ $D^{-}$) is called the \emph{add operator} (resp.\ \emph{drop operator}).
Due to the Mecke formula \cref{equation:Mecke}, these operations do not depend on the choice of the representative of $f$ $\Pi$-almost surely.
\begin{lemma}\label{lemma:D_bounded}
  Let $F \in \CL^{\infty}(\Pi)$, then $D^{+}F \in \CL^{\infty}(\Pi \otimes \nu)$.
\end{lemma}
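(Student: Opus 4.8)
The plan is to bound the two pieces of $D^{+}_{z}F(\eta) = F(\eta + \delta_{z}) - F(\eta)$ separately and conclude by the triangle inequality, obtaining the quantitative bound $\|D^{+}F\|_{\CL^{\infty}(\Pi \otimes \nu)} \leq 2\|F\|_{\CL^{\infty}(\Pi)}$. Write $M = \|F\|_{\CL^{\infty}(\Pi)}$, which is finite since $F \in \CL^{\infty}(\Pi)$, and introduce the exceptional set $A = \{\eta \in \CM : |F(\eta)| > M\}$, so that $\Pi(A) = 0$ by definition of the essential supremum.

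The unshifted term $F(\eta)$ causes no trouble: as it does not depend on $z$, the set $\{(\eta,z) : |F(\eta)| > M\}$ equals $A \times Z$, which is $\Pi \otimes \nu$-null because its first marginal $A$ is $\Pi$-null (the integrand $1_{A}$ is supported on a null set, so integrating it against $\Pi$ and then against $\nu$ gives zero, even when $\nu(Z) = \infty$).

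The crux of the argument is controlling the shifted term $F(\eta + \delta_{z})$, since the full-measure set on which $F$ is bounded need not be stable under the map $\eta \mapsto \eta + \delta_{z}$; this is precisely where the Mecke equation enters. I would apply \cref{equation:Mecke} to the nonnegative integrand $u(\eta, z) = 1_{A}(\eta)$, which is $\{0,1\}$-valued and independent of $z$. The right-hand side becomes $\int \int 1_{A}(\eta + \delta_{z})\, \Pi(\dd\eta)\,\nu(\dd z)$, which is exactly the $\Pi \otimes \nu$-measure of the set where $|F(\eta + \delta_{z})| > M$, whereas the left-hand side reduces to $\int 1_{A}(\eta)\, \eta(Z)\, \Pi(\dd\eta)$. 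The latter integrand vanishes off the $\Pi$-null set $A$, so this integral equals zero. Consequently $1_{A}(\eta + \delta_{z}) = 0$ for $\Pi \otimes \nu$-almost every $(\eta, z)$, that is, $|F(\eta + \delta_{z})| \leq M$ outside a $\Pi \otimes \nu$-null set.

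Combining the two estimates through $|D^{+}_{z}F(\eta)| \leq |F(\eta + \delta_{z})| + |F(\eta)| \leq 2M$ for $\Pi \otimes \nu$-almost every $(\eta, z)$ yields the claim. The only point requiring care is the applicability of \cref{equation:Mecke}, which is stated for $[0,\infty]$-valued integrands; since $1_{A}$ is $\{0,1\}$-valued this is immediate and no integrability caveat is needed.
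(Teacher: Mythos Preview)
Your proof is correct and follows essentially the same route as the paper's: both apply the Mecke identity \cref{equation:Mecke} to the indicator of the $\Pi$-null set where $F$ exceeds its essential bound, deducing that $F(\eta+\delta_{z})$ is $\Pi\otimes\nu$-essentially bounded and then concluding for $D^{+}F$ by subtraction. The paper additionally checks bi-measurability of $(\eta,z)\mapsto F(\eta+\delta_{z})$ (via measurability of $z\mapsto\delta_{z}$), which you tacitly assume; conversely, your two-sided formulation with $\{|F|>M\}$ gives the explicit bound $\|D^{+}F\|_{\CL^{\infty}(\Pi\otimes\nu)}\le 2\|F\|_{\CL^{\infty}(\Pi)}$, whereas the paper's one-sided sets $\{F\ge t\}$ only establish finiteness.
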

\begin{proof}
  First of all, $\delta \colon Z \ni z \mapsto \delta_{z} \in \CM$ is measurable (if $A$ is of the form $\{ \eta(B) = k \}$ for some $B \in \gZ$, then the pre-image by $\delta$ of $A$ is $B$, if $k = 1$; and the pre-image is empty, if $k > 1$).
  Hence, $D^{+}F$ is bi-measurable.
  Now let
  \begin{align*}
    & U = \{t \in \dR,\, \text{such that}\ \Pi(F \geq t) = 0 \}; \\
    & V = \{t \in \dR,\, \text{such that}\ (\Pi \otimes \nu)(F + D_{z}^{+}F \geq t) = 0 \}.
  \end{align*}
  By assumption $U \ne \emptyset$, and we want to show that $V \ne \emptyset$.
  Take $t \in U$, by the Mecke formula \cref{equation:Mecke}, we have that
  \begin{equation*}
    \int \int 1_{\{F + D_{z}^{+}F \geq t\}} \nu(\dd z) \Pi(\dd \eta) = \int \int 1_{\{F \geq t\}} \eta(\dd z) \Pi(\dd \eta) = 0.
  \end{equation*}
  Hence $t \in V$, this concludes the proof.
\end{proof}

\subsubsection*{Malliavin derivative}
For a random variable $F$, we write $F \in \dom D$ whenever: $F \in \CL^{2}(\Pi)$ and
\begin{equation*}
  \int_{Z} \int {(D_{z}^{+}F(\eta))}^{2} \Pi(\dd \eta) \nu(\dd z) < \infty.
\end{equation*}
Given $F \in \dom D$, we write $DF$ to denote the random mapping $DF \colon Z \ni z \mapsto D_{z}^{+}F$.
We regard $D$ as an unbounded operator $\CL^{2}(\Pi) \to \CL^{2}(\Pi \otimes \nu)$ with domain $\dom D$.
The operator $D$ is closed \cite[Lemma 3]{LastAnaSto} and thus $\dom D$ is Hilbert when equipped with the scalar product
\begin{equation*}
  (F, G) \mapsto \Pi(FG) + (\Pi \otimes \nu)(DF DG).
\end{equation*}

\subsubsection*{The divergence operator}
We consider the \emph{divergence operator} $\delta = D^{*} \colon \CL^{2}(\Pi \otimes \nu) \to \CL^{2}(\nu)$, that is the unbounded adjoint of $D$.
Its domain $\dom \delta$ is composed of random functions $u \in \CL^{2}(\Pi \otimes \nu)$ such that there exists a constant $c > 0$ such that
\begin{equation*}
  \left|\int \int D^{+}_{z}F(\eta) u(\eta, z) \nu(\dd z) \Pi(\dd \eta) \right| \leq c \sqrt{\Pi(F^{2})},\quad \forall F \in \dom D.
\end{equation*}
For $u \in \dom \delta$, the quantity $\delta u \in \CL^{2}(\Pi)$ is completely characterised by the duality relation
\begin{equation}\label{equation:integration_by_parts_delta}
  \esp_{\Pi} G \delta u = \int \int u(\eta, z) D_{z} F(\eta) \Pi(\dd \eta) \nu(\dd z), \quad \forall F \in \dom D.
\end{equation}
From \cite[Theorem 5]{LastAnaSto}, we have the following Skorokhod isometry.
For $u \in \CL^{2}(\Pi \otimes \nu)$, $u \in \dom \delta$ if and only if $\int {(D_{z}^{+}u(\eta, z'))}^{2} \Pi(\dd \eta) \nu(\dd z) \nu(\dd z') < \infty$ and, in that case:
\begin{equation}\label{equation:Skorokhod_isometry}
  \esp_{\Pi} {(\delta u)}^{2} = \int \int {u(\eta, z)}^{2} \nu(\dd z) \Pi(\dd \eta) + \int \int D_{z}^{+} u(\eta, z') D_{z'}^{+} u(\eta, z) \Pi(\dd\eta) \nu(\dd z) \nu(\dd z').
\end{equation}
The Skorokhod isometry implies the following Heisenberg commutation relation.
For all $u \in \dom \delta$, and all $z \in Z$ such that $z' \mapsto D_{z}^{+}u(z') \in \dom \delta$:
\begin{equation*}
  D_{z}\delta u(\eta) = u(\eta, z) + \delta D_{z}^{+} u(\eta, \cdot).
\end{equation*}
From \cite[Theorem 6]{LastAnaSto}, we have the following pathwise representation of the divergence: if $u \in \dom \delta \cap \CL^{1}(\Pi \otimes \nu)$, then
\begin{equation}\label{equation:divergence_pathwise}
  \delta u(\eta) = \int u(\eta, z) \eta(\dd z) - \int u(\eta, z) \nu(\dd z).
\end{equation}
Note that $\dom \delta \cap \CL^{1}(\Pi \otimes \nu)$ is dense in $\dom \delta$.

\subsubsection*{The Ornstein-Uhlenbeck generator}
The \emph{Ornstein-Uhlenbeck generator} $L$ is the unbounded self-adjoint operator on $\CL^{2}(\Pi)$ verifying
\begin{equation*}
  \dom L = \{ F \in \dom D,\, \text{such that}\ DF \in \dom \delta \} \quad \text{and} \quad L = - \delta D.
\end{equation*}
Classically, $\dom L$ is endowed with the Hilbert norm $\esp_{\Pi} \left(F^{2} + {(LF)}^{2}\right)$.
The eigenvalues of $L$ are the non-positive integers and for $q \in \dN$ the eigenvectors associated to $-q$ are the so-called iterated Poisson stochastic integrals of order $q$ (see \cite{LastAnaSto} for details).
The kernel of $L$ coincides with the set of constants and the \emph{pseudo-inverse of $L$} is defined on the quotient $\CL^{2}(\Pi) \setminus \ker L$, that is the space of centered square integrable random variables.
For $F \in \CL^{2}(\Pi)$ with $\Pi(F) = 0$, we have $LL^{-1}F = F$.
Moreover, if $F \in \dom L$, we have $L^{-1}LF = F$.
As a consequence of \cref{equation:Skorokhod_isometry}, $\dom D^{2} = \dom L$.

\subsubsection*{The Dirichlet form}
We refer to \cite[Chapter 1]{BouleauHirsch} for more details about the formalism of Dirichlet forms.
The introduction of \cite{AGSBakryEmery} also provides an overview of the subject.
For every $F, G \in \dom D$, we let $\cE(F,G) = (\Pi \otimes \nu)(DF DG)$.
Since by \cite[Lemma 3]{LastAnaSto}, the operator $D$ is closed, $\cE$ is a Dirichlet form with domain $\dom \cE = \dom D$.
Moreover, in view of the integration by parts \cref{equation:integration_by_parts_delta}, the generator of $\cE$ is given by $L$.
By \cite[Chapter I Section 3]{BouleauHirsch}, $\CA := \dom D \cap \CL^{\infty}(\Pi)$ is an algebra with respect to the pointwise multiplication; $\dom D$ and $\CA$ are stable by composition with Lipschitz functions; $\CA$ is stable by composition with $\CC^{k}(\dR^{d})$ functions ($k \in \bar{\dN}$).

\subsubsection*{The {carré du champ} operator}
For every $F \in \CA$, we define the \emph{functional carré du champ} of $F$ as the linear form $\Gammaa(F)$ on $\CA$, defined by
\begin{equation*}
  \Gammaa(F)[\Phi] = \cE(F,F \Phi) - \frac{1}{2} \cE(F^{2},\Phi), \quad \text{for all}\ \Phi \in \CA.
\end{equation*}
From~\cite[Proposition I.4.1.1]{BouleauHirsch},
\begin{equation*}
  0 \leq \Gammaa(F)[\Phi] \leq |\Phi|_{\CL^{\infty}(\Pi)} \cE(F), \quad \text{for all}\ F, \Phi \in \CA.
\end{equation*}
This allows us to extend the definition of the linear form $\Gammaa(F)$ to all $F \in \dom \cE$.
For $F \in \dom \cE$, we write that $F \in \dom \Gamma$ if the linear form $\Gammaa(F)$ can be represented by a measure absolutely continuous with respect to $\Pi$; in that case we denote its density by $\Gamma(F)$.
In other words, $F \in \dom \Gamma$ if and only if there exists a non-negative $\Gamma(F) \in \CL^{1}(\Pi)$ such that
\begin{equation*}
  \Gammaa(F)[\Phi] = \esp_{\Pi} \Gamma(F) \Phi,\quad \text{for all}\ \Phi \in \CA.
\end{equation*}
From the general theory, we know that $\dom \Gamma$ is a closed sub-linear space of $\dom \cE$.
In the Poisson case, we prove the following representation of the carré du champ that is a consequence of \cref{lemma:chain_rule_multiplication}.
\begin{proposition}\label{proposition:carre_du_champ}
  We have that $\dom \Gamma = \dom D$ and, for all, $F \in \dom D$:
  \begin{equation*}
    \Gamma(F) = \frac{1}{2} \int {(D_{z}^{+}F)}^{2} \nu(\dd z) + \frac{1}{2} \int {(D_{z}^{-}F)}^{2} \eta(\dd z).
  \end{equation*}
\end{proposition}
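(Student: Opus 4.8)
The plan is to establish the formula first for bounded $F$ and then to remove the boundedness by approximation. Observe that one inclusion is automatic: by construction $\dom \Gamma \subseteq \dom \cE = \dom D$. Hence it suffices to show that for every $F \in \dom D$ the linear form $\Gammaa(F)$ is represented by the non-negative function displayed in the statement, which in particular must be checked to lie in $\CL^{1}(\Pi)$; this simultaneously proves the representation formula and the remaining inclusion $\dom D \subseteq \dom \Gamma$.

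For $F, \Phi \in \CA$, I would expand $\Gammaa(F)[\Phi] = \cE(F, F\Phi) - \tfrac12\cE(F^{2},\Phi)$ using the multiplicative chain rule of \cref{lemma:chain_rule_multiplication}, namely $D_{z}^{+}(GH) = G\,D_{z}^{+}H + H\,D_{z}^{+}G + D_{z}^{+}G\,D_{z}^{+}H$. The terms $F\,D_{z}^{+}F\,D_{z}^{+}\Phi$ produced by $\cE(F,F\Phi)$ and by $\tfrac12\cE(F^{2},\Phi)$ cancel, leaving
\[ \Gammaa(F)[\Phi] = \int\int \Big( \Phi\,(D_{z}^{+}F)^{2} + \tfrac12 (D_{z}^{+}F)^{2}\,D_{z}^{+}\Phi \Big)\,\nu(\dd z)\,\Pi(\dd\eta). \]
To convert the last summand into an integral against $\eta$, I would write $D_{z}^{+}\Phi = \Phi(\cdot+\delta_{z}) - \Phi$ and apply the Mecke formula \cref{equation:Mecke} with $u(\eta,z) = (D_{z}^{+}F(\eta-\delta_{z}))^{2}\Phi(\eta)$. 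Since $u(\eta+\delta_{z},z) = (D_{z}^{+}F(\eta))^{2}\Phi(\eta+\delta_{z})$ and, $\eta(\dd z)$-almost everywhere, $z \in \eta$ so that $D_{z}^{+}F(\eta-\delta_{z}) = D_{z}^{-}F(\eta)$, this identifies $\int\int (D_{z}^{+}F)^{2}\Phi(\cdot+\delta_{z})\,\nu(\dd z)\Pi(\dd\eta)$ with $\esp_{\Pi}[\Phi\int (D_{z}^{-}F)^{2}\eta(\dd z)]$; collecting terms yields the stated identity for $F,\Phi \in \CA$.

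To pass to general $F \in \dom D$, I would first verify integrability of the candidate density: its add part has $\Pi$-expectation $\cE(F)$ by definition of $\dom D$, and the same Mecke computation with $\Phi \equiv 1$ shows its drop part also has expectation $\cE(F)$, so the density lies in $\CL^{1}(\Pi)$. Then, fixing $\Phi \in \CA$ and approximating $F$ in the graph norm of $D$ by truncations $F_{n} \in \CA$ (available because $\cE$ is Markovian), I would pass to the limit in $\Gammaa(F_{n})[\Phi] = \esp_{\Pi}[\Phi\,\Gamma(F_{n})]$. The left side converges by bilinearity together with the Cauchy--Schwarz inequality for the carré du champ and the bound $\Gammaa(G)[\Phi] \leq |\Phi|_{\CL^{\infty}(\Pi)}\cE(G)$; the right side converges since $(D^{+}F_{n})^{2} \to (D^{+}F)^{2}$ in $\CL^{1}(\Pi\otimes\nu)$ (the squaring map is continuous from $\CL^{2}$ to $\CL^{1}$) and the drop term, after the same change of variables, is again controlled by $\|D^{+}F_{n} - D^{+}F\|_{\CL^{2}(\Pi\otimes\nu)}$.

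The main obstacle I anticipate is the drop term $\int(D_{z}^{-}F)^{2}\eta(\dd z)$, in both the algebraic identity and the limiting argument: being integrated against the random measure $\eta$ rather than against $\nu$, neither its finiteness nor its continuity in $F$ is directly accessible from the $\CL^{2}(\Pi\otimes\nu)$-theory of $D$. The uniform device that resolves this is the Mecke formula combined with the pathwise identity $D_{z}^{-}F(\eta) = D_{z}^{+}F(\eta-\delta_{z})$ on $\{z\in\eta\}$, which turns every $\eta$-integral of a drop quantity into a $\nu$-integral of an add quantity, after which all estimates reduce to the already available Hilbert-space bounds for $D$.
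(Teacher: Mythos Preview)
Your argument is correct and follows essentially the same route as the paper: expand the product via the algebraic rule $D_{z}^{+}(GH)=G\,D_{z}^{+}H+H\,D_{z}^{+}G+D_{z}^{+}G\,D_{z}^{+}H$, use Mecke to exchange the $\nu$-integral of $(D_{z}^{+}F)^{2}\Phi(\cdot+\delta_{z})$ for the $\eta$-integral of $(D_{z}^{-}F)^{2}\Phi$, and extend by density from $\CA$ to $\dom D$. The only organisational difference is that the paper first packages the cancellation into a Leibniz rule for the energy bracket ${[\,\cdot\,,\,\cdot\,]}_{\Gamma}$ (\cref{lemma:chain_rule_multiplication}) and then specialises, whereas you compute $\cE(F,F\Phi)-\tfrac12\cE(F^{2},\Phi)$ directly with $D^{+}$ alone and invoke Mecke only at the end; your version is more concrete, the paper's more reusable. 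One minor correction: what you cite as \cref{lemma:chain_rule_multiplication} for the identity $D_{z}^{+}(GH)=G\,D_{z}^{+}H+H\,D_{z}^{+}G+D_{z}^{+}G\,D_{z}^{+}H$ is not that lemma at all---this formula is just immediate algebra (the bilinear version of \cref{equation:chain_rule_plus_square}); \cref{lemma:chain_rule_multiplication} in the paper is precisely the integrated Leibniz identity for ${[\,\cdot\,,\,\cdot\,]}_{\Gamma}$ that your computation bypasses. Your density step is in fact more explicit than the paper's, which simply asserts the extension to $\dom\cE$.
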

We extend $\Gamma$ to a bilinear map
\begin{equation*}
  \Gamma(F,G) = \frac{1}{2} \int D_{z}^{+}F D_{z}^{+}G \nu(\dd z) + \frac{1}{2} \int D_{z}^{+}F D_{z}^{+} G \eta(\dd z), \quad \forall F,G \in \dom D.
\end{equation*}
\begin{remark}
  This representation of $\Gamma$ using the add-one and drop-one operators is, at the formal level, well-known in the literature: it appears without a proof in the seminal paper \cite[191]{BakryEmeryDiffusionsHypercontractives}.
  One of the main assumption of \cite{BakryEmeryDiffusionsHypercontractives} is the existence of an algebra of functions contained in $\dom L$, the so called \emph{standard algebra}.
  In the case of a Poisson point process, it is not clear what to choose for the standard algebra (note that $\CA = \dom \cE \cap \CL^{\infty}(\Pi)$ is \emph{not} included in $\dom L$).
  \cite{DoeblerPeccatiFMT} derives the formula without relying on the notion of standard algebra.
  However, since \cite{DoeblerPeccatiFMT} follows the strategy of \cite{BakryEmeryDiffusionsHypercontractives}, \cite{DoeblerPeccatiFMT} has to assume a restrictive assumption on $F$: $F \in \dom L$ and $F^{2} \in \dom L$.
  In particular, the authors of \cite{DoeblerPeccatiFMT} did not obtain that $\dom \Gamma = \dom \cE$.
  This is why, following \cite{BouleauHirsch}, we use the formalism of Dirichlet forms to compute the carré du champ and obtain a representation for the carré du champ under minimal assumptions.
\end{remark}
\subsubsection*{The energy bracket}
Given two elements $u \in \CL^{2}(\nu \otimes \Pi)$ and $v \in \CL^{2}(\nu \otimes \Pi)$, we define the \emph{energy bracket} of $u$ and $v$: it is the function
\begin{equation*}
  {[u,v]}_{\Gamma}(\eta) = \frac{1}{2} \int u(\eta, z) v(\eta, z) \nu(\dd z) + \frac{1}{2} \int u(\eta - \delta_{z}, z) v(\eta- \delta_{z}, z) \eta(\dd z).
\end{equation*}
The energy bracket can be compared with the two related objects:
\begin{align*}
  & {[u, v]}_{+}(\eta) = \int u(\eta, z) v(\eta, z) \nu(\dd z); \\
  & {[u, v]}_{-}(\eta) = \int u(\eta - \delta_{z}, z) v(\eta - \delta_{z}, z) \eta(\dd z).
\end{align*}
Note that ${[u, v]}_{+}$ is simply the scalar product of $u$ and $v$ in $\CL^{2}(\nu)$.
  By the Cauchy-Schwarz inequality ${[u,v]}_{+} \in \CL^{1}(\Pi)$, and by the Mecke formula:
  \begin{equation*}\label{equation:expectation_energy_bracket}
    \esp_{\Pi} {[u,v]}_{\Gamma} = \esp_{\Pi} {[u,v]}_{+} = \esp_{\Pi} {[u,v]}_{-}.
  \end{equation*}
  If $F$ and $G \in \dom D$, by \cref{proposition:carre_du_champ}, we have that
\begin{equation*}
  \Gamma(F,G) = {[DF, DG]}_{\Gamma}.
\end{equation*}
The fact that the carré du champ is \emph{not} $\nu(DF DG)$ is characteristic of non-local Dirichlet forms.

\subsubsection*{A formula for the divergence}
Since the operator $D$ is not a derivation, \cite[Proposition 1.3.3]{NualartMalliavinCalculus} (obtained in the setting of Malliavin calculus for Gaussian processes) does not hold.
We however have the following Poisson counterpart.
\begin{lemma}
  Let $F \in \dom D$ and $u \in \dom \delta$ such that $Fu \in \dom \delta$.
  Then,
  \begin{equation*}
    \delta(Fu) = F \delta u - [DF, u]_{-}.
  \end{equation*}
\end{lemma}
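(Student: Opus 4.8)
The plan is to pin down $\delta(Fu)$ through the duality relation \cref{equation:integration_by_parts_delta} and to check the asserted identity after pairing both sides against a sufficiently rich family of bounded test functions. A convenient such family is the set of bounded cylindrical functions $G = f(\eta(B_1), \dots, \eta(B_k))$ with $f \in \CC^{\infty}$ bounded and $\nu(B_i) < \infty$: these lie in $\CA = \dom D \cap \CL^{\infty}(\Pi)$, they form a multiplicative class, and, $\nu$ being $\sigma$-finite, they generate $\gM$. Since $\delta(Fu) \in \CL^{2}(\Pi)$ by hypothesis and $F \delta u - [DF, u]_{-} \in \CL^{1}(\Pi)$ (as will be checked along the way), once we establish $\esp_{\Pi} G \delta(Fu) = \esp_{\Pi} G(F \delta u - [DF, u]_{-})$ for all such $G$, a monotone-class argument combined with dominated convergence upgrades it to the almost-sure equality of the two sides, which is the claim.

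So fix such a $G$. The discrete Leibniz rule
\begin{equation*}
  D_{z}^{+}(GF) = G D_{z}^{+} F + F D_{z}^{+} G + D_{z}^{+} F \, D_{z}^{+} G,
\end{equation*}
obtained by expanding $(GF)(\eta + \delta_{z}) - (GF)(\eta)$, together with the boundedness of $D^{+} G$ guaranteed by \cref{lemma:D_bounded}, shows that every summand is in $\CL^{2}(\Pi \otimes \nu)$; hence $GF \in \dom D$. I would then apply the duality relation \cref{equation:integration_by_parts_delta} twice: to $u \in \dom \delta$ tested against $GF$, and to $Fu \in \dom \delta$ tested against $G$. This gives
\begin{equation*}
  \esp_{\Pi} GF \delta u = \int \int u \bigl(G D_{z}^{+} F + F D_{z}^{+} G + D_{z}^{+} F \, D_{z}^{+} G\bigr) \nu(\dd z) \Pi(\dd \eta),
\end{equation*}
while the second application produces exactly the middle summand, $\esp_{\Pi} G \delta(Fu) = \int \int u F D_{z}^{+} G \, \nu(\dd z) \Pi(\dd \eta)$. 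Subtracting,
\begin{equation*}
  \esp_{\Pi} G \delta(Fu) = \esp_{\Pi} GF \delta u - \int \int u \bigl(G D_{z}^{+} F + D_{z}^{+} F \, D_{z}^{+} G\bigr) \nu(\dd z) \Pi(\dd \eta).
\end{equation*}

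It remains to recognise the last integral as $\esp_{\Pi} G [DF, u]_{-}$, and this is where the Mecke formula \cref{equation:Mecke} enters. Unfolding $G [DF, u]_{-}(\eta) = \int G(\eta) (F(\eta) - F(\eta - \delta_{z})) u(\eta - \delta_{z}, z) \eta(\dd z)$ and applying \cref{equation:Mecke} with the shift $\eta \mapsto \eta + \delta_{z}$ turns the integrand into $G(\eta + \delta_{z}) D_{z}^{+} F(\eta) u(\eta, z)$; writing $G(\eta + \delta_{z}) = G(\eta) + D_{z}^{+} G(\eta)$ reproduces precisely the integral above. Boundedness of $G$ and the memberships $u, D^{+} F \in \CL^{2}(\Pi \otimes \nu)$ make all integrals absolutely convergent by Cauchy--Schwarz, which both legitimises the signed Mecke formula and, taking $G \equiv 1$, shows $[DF, u]_{-} \in \CL^{1}(\Pi)$; together with $F \delta u \in \CL^{1}(\Pi)$ (as $F, \delta u \in \CL^{2}(\Pi)$) this confirms the integrability asserted above.

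The algebraic heart of the argument — the Leibniz rule plus a single Mecke shift — is elementary; the points that genuinely require care are the verification $GF \in \dom D$ (needed so that \cref{equation:integration_by_parts_delta} applies) and the integrability bookkeeping that justifies the Mecke step and the final monotone-class passage. I expect the latter to be the main, if modest, obstacle: the hypotheses control $F$ and $u$ only in the $\CL^{2}$ sense, so the pathwise object $[DF, u]_{-}$ cannot be manipulated freely, and it is precisely the boundedness of the test function $G$ that keeps every expression integrable.
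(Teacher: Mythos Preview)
Your argument is correct and reaches the same identity, but by a route genuinely different from the paper's. The paper also pairs against a test function $G \in \CA$ and writes $\esp_{\Pi} G\,\delta(Fu) = \int\!\!\int F u\, D_z^{+}G\,\nu\,\Pi$, but then applies the Mecke formula \emph{immediately} to this integral, converting it into $\int G(\eta)\int F(\eta-\delta_z)u(\eta-\delta_z,z)\,\eta(\dd z)\,\Pi(\dd\eta) - \int G(\eta)\int Fu\,\nu\,\Pi$; splitting $F(\eta-\delta_z) = F(\eta) - D_z^{-}F(\eta)$ and invoking the pathwise representation \cref{equation:divergence_pathwise} of $\delta u$ then yields the claim. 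That route needs the extra hypothesis $u \in \CL^{1}(\Pi\otimes\nu)$, removed at the end by density of $\dom\delta \cap \CL^{1}$ in $\dom\delta$. Your approach instead expands $D^{+}(GF)$ via the Leibniz rule, isolates the term $F\,\delta u$ through a second application of the duality \cref{equation:integration_by_parts_delta} (to $u$ against $GF$) rather than through the pathwise formula, and only invokes Mecke to identify the residual with $[DF,u]_{-}$. The payoff is that you stay entirely within the $\CL^{2}$ duality framework and need no approximation in $u$; the cost is the verification $GF \in \dom D$, which the paper's route avoids. One small point there: boundedness of $D^{+}G$ from \cref{lemma:D_bounded} alone does not force $F\,D^{+}G \in \CL^{2}(\Pi\otimes\nu)$ when $\nu(Z)=\infty$, since $F$ is merely in $\CL^{2}(\Pi)$; you also need that $D_z^{+}G$ vanishes for $z \notin \bigcup_i B_i$, which does hold for your cylindrical $G$ and makes the estimate go through.
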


\begin{proof}
  Let $G \in \CA = \dom D \cap \CL^{\infty}(\Pi)$, and assume moreover that $u \in \CL^{1}(\Pi \otimes \nu)$.
  By integration by parts and the Mecke formula, we find that
  \begin{align*}
    \esp_{\Pi} G \delta(Fu) &= \int F(\eta) u(\eta, z) D_{z}G(\eta)  \nu(\dd z) \Pi(\dd \eta) \\
                      &= \int G(\eta) \int (F(\eta-\delta_{z}) u(\eta-\delta_{z}, z)) \eta(\dd z) \Pi(\dd \eta) - \int G(\eta) \int F(\eta) u(\eta, z) \nu(\dd z) \Pi(\dd \eta).
  \end{align*}
  Using that $F(\eta-\delta_{z})u(\eta-\delta_{z}, z) = F(\eta) u(\eta-\delta_{z}, \eta) - D_{z}^{-}F u(\eta-\delta_{z}, z)$, we conclude by \cref{equation:divergence_pathwise} that
  \begin{equation*}
    \esp_{\Pi} G \delta(Fu) = \esp_{\Pi} G F \delta u - \esp_{\Pi} G [DF, u]_{\eta}.
  \end{equation*}
  We conclude by density.
\end{proof}

\subsubsection*{Algebraic relations for the add and drop operators}
Some immediate algebra yields:
  \begin{align}
    \label{equation:chain_rule_plus_square} D_{z}^{+}F^{2} = 2 F D_{z}^{+}F + {(D_{z}^{+}F)}^{2}; \\
    \label{equation:chain_rule_minus_square} D_{z}^{-}F^{2} = 2 F D_{z}^{-}F - {(D_{z}^{-}F)}^{2}.
  \end{align}

\subsubsection*{An integrated chain rule for the energy}
Recall that we write $\CA$ for the algebra $\dom \cE \cap \CL^{\infty}(\Pi)$.
We now remark that even if $D$ is not a derivation, the Dirichlet energy $\cE$ acts as a derivation.
\begin{lemma}\label{lemma:chain_rule_multiplication}
  Let $F$ and $G \in \CA$, and $u \in \CL^{2}(\Pi \otimes \nu)$.
  Then,
\begin{equation*}
  \esp_{\Pi} {[D(FG), u]}_{\Gamma} = \esp_{\Pi} F {[DG, u]}_{\Gamma} + \esp_{\Pi} G {[DF, u]}_{\Gamma}.
  \end{equation*}
  In particular, with $H \in \dom D$:
  \begin{equation*}\label{equation:energy_derivation}
    \cE(FG,H) = \esp_{\Pi} F {[DG, DH]}_{\Gamma} + \esp_{\Pi} G {[DF, DH]}_{\Gamma}.
  \end{equation*}
  This establishes \cref{proposition:carre_du_champ}.
\end{lemma}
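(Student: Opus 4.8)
The plan is to reduce every expectation to an integral against the symmetric ``$+$'' bracket via the Mecke formula, and then to isolate a single defect term measuring the failure of $D^{+}$ to be a derivation. The algebraic input is the Leibniz-type identity
\[
  D_{z}^{+}(FG) = F\, D_{z}^{+}G + G\, D_{z}^{+}F + D_{z}^{+}F\, D_{z}^{+}G,
\]
obtained by expanding $F(\eta+\delta_{z})G(\eta+\delta_{z}) - F(\eta)G(\eta)$ (it is the polarised form of \cref{equation:chain_rule_plus_square}). Since $F, G \in \CA$ are bounded and $\CA$ is an algebra, $FG \in \CA \subseteq \dom D$, and \cref{lemma:D_bounded} gives $D^{+}F, D^{+}G \in \CL^{\infty}(\Pi \otimes \nu)$; combined with $DF, DG, D(FG), u \in \CL^{2}(\Pi \otimes \nu)$ and Cauchy--Schwarz, all integrals below are absolutely convergent, which legitimises applying the Mecke formula \cref{equation:Mecke} to the signed integrands.

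For the left-hand side I would use the Mecke identity $\esp_{\Pi} {[v, u]}_{\Gamma} = \esp_{\Pi} {[v, u]}_{+}$ with $v = D(FG)$ and insert the Leibniz identity, obtaining
\[
  \esp_{\Pi} {[D(FG), u]}_{\Gamma} = \esp_{\Pi} F {[DG, u]}_{+} + \esp_{\Pi} G {[DF, u]}_{+} + \esp_{\Pi} \int D_{z}^{+}F\, D_{z}^{+}G\, u(\eta, z)\, \nu(\dd z).
\]
For the right-hand side I would treat each summand on its own. The ``$+$'' half of $\esp_{\Pi} F {[DG, u]}_{\Gamma}$ gives $\tfrac{1}{2} \esp_{\Pi} F {[DG, u]}_{+}$; for the ``$-$'' half I apply the Mecke formula to the integrand $F(\eta) (D_{z}^{+}G)(\eta - \delta_{z}) u(\eta - \delta_{z}, z)$, so that the shift $\eta \mapsto \eta + \delta_{z}$ turns the outer factor into $F(\eta + \delta_{z}) = F(\eta) + D_{z}^{+}F(\eta)$. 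The $F(\eta)$ part rebuilds the second half of $\esp_{\Pi} F {[DG, u]}_{+}$, while the $D_{z}^{+}F$ part yields $\tfrac{1}{2} \esp_{\Pi} \int D_{z}^{+}F\, D_{z}^{+}G\, u\, \nu(\dd z)$; hence
\[
  \esp_{\Pi} F {[DG, u]}_{\Gamma} = \esp_{\Pi} F {[DG, u]}_{+} + \tfrac{1}{2} \esp_{\Pi} \int D_{z}^{+}F\, D_{z}^{+}G\, u(\eta, z)\, \nu(\dd z),
\]
and symmetrically for $\esp_{\Pi} G {[DF, u]}_{\Gamma}$. Adding the two, the two half-defects combine into the full defect term on the left-hand side, and the identity follows. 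This compensation is the crux and the step I expect to be most delicate: the non-derivation term $D_{z}^{+}F\, D_{z}^{+}G$ produced by the Leibniz rule is matched exactly by the increment $D_{z}^{+}F$ that the Mecke shift creates on the external factor in the asymmetric part of the bracket.

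The ``In particular'' identity is then immediate on taking $u = DH$ with $H \in \dom D$, since $\esp_{\Pi} {[D(FG), DH]}_{\Gamma} = \esp_{\Pi} {[D(FG), DH]}_{+} = \cE(FG, H)$. To deduce \cref{proposition:carre_du_champ}, I would substitute this into the definition of the functional carré du champ for $F, \Phi \in \CA$: writing $\cE(F, F\Phi) = \esp_{\Pi} F {[D\Phi, DF]}_{\Gamma} + \esp_{\Pi} \Phi {[DF, DF]}_{\Gamma}$ (the identity applied to the product $F\Phi$ against $H = F$) and $\cE(F^{2}, \Phi) = 2 \esp_{\Pi} F {[DF, D\Phi]}_{\Gamma}$, the symmetry ${[D\Phi, DF]}_{\Gamma} = {[DF, D\Phi]}_{\Gamma}$ cancels the cross terms and leaves
\[
  \Gammaa(F)[\Phi] = \esp_{\Pi} \Phi\, {[DF, DF]}_{\Gamma}.
\]
Since $D_{z}^{+}F(\eta - \delta_{z}) = F(\eta) - F(\eta - \delta_{z}) = D_{z}^{-}F(\eta)$ on the atoms of $\eta$, one has ${[DF, DF]}_{\Gamma} = \tfrac{1}{2}\int (D_{z}^{+}F)^{2} \nu(\dd z) + \tfrac{1}{2}\int (D_{z}^{-}F)^{2} \eta(\dd z)$, the claimed density; thus $\CA \subseteq \dom \Gamma$ with $\Gamma(F) = {[DF, DF]}_{\Gamma}$. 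Finally, to pass to all of $\dom D$ I would use that $\CA = \dom \cE \cap \CL^{\infty}(\Pi)$ is dense in $\dom \cE = \dom D$ for the form norm (truncation by normal contractions) and that $(F, G) \mapsto {[DF, DG]}_{\Gamma}$ is continuous from $\dom D$ into $\CL^{1}(\Pi)$, the Mecke formula and Cauchy--Schwarz giving $\esp_{\Pi} |{[DF, DG]}_{\Gamma}| \leq \|DF\|_{\CL^{2}(\Pi \otimes \nu)} \|DG\|_{\CL^{2}(\Pi \otimes \nu)}$. Since $\dom \Gamma$ is closed in $\dom \cE$ and $\Gamma$ is continuous there, the representation passes to the limit along approximations in $\CA$, yielding $\dom \Gamma = \dom D$ together with the stated formula.
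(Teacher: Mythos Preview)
Your argument is correct and rests on the same two ingredients as the paper's proof: the Leibniz defect of $D^{+}$ and the Mecke formula. The organisation differs slightly. The paper keeps the full $\Gamma$-bracket and expands $[D(FG),u]_{\Gamma}$ pathwise using \emph{both} polarised chain rules \cref{equation:chain_rule_plus_square,equation:chain_rule_minus_square}: the $+$ half contributes the defect $+\tfrac{1}{2}\int D_{z}^{+}F\,D_{z}^{+}G\,u\,\nu(\dd z)$, the $-$ half contributes $-\tfrac{1}{2}\int D_{z}^{-}F\,D_{z}^{-}G\,u(\eta-\delta_{z},z)\,\eta(\dd z)$, and a single application of Mecke shows these cancel. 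You instead collapse everything to the $+$ bracket at the outset via $\esp_{\Pi}[\cdot,\cdot]_{\Gamma}=\esp_{\Pi}[\cdot,\cdot]_{+}$, so only the $D^{+}$ Leibniz rule is needed; the compensating half-defects then arise from the Mecke shift acting on the external factor $F$ (resp.\ $G$) in $F[DG,u]_{-}$. The two routes are equivalent reshufflings of the same cancellation; yours has the minor advantage of never invoking the $D^{-}$ chain rule. Your deduction of \cref{proposition:carre_du_champ} and the extension from $\CA$ to $\dom D$ by density and $\CL^{1}(\Pi)$-continuity of $(F,G)\mapsto[DF,DG]_{\Gamma}$ coincide with the paper's, with more detail supplied on the closure step.
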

\begin{remark}
  The formula \cref{equation:energy_derivation} for $\cE$ \emph{cannot} be iterated.
  In particular, consistently with the fact that $L$ is not a diffusion, \cref{equation:energy_derivation} does \emph{not} imply $\cE(\phi(F), G) = \esp_{\Pi} \phi'(F) {[DF, DG]}_{\Gamma}$.
\end{remark}
\begin{proof}
  Since $F \in \CL^{\infty}(\Pi)$, by \cref{lemma:D_bounded}, we have that $DF \in \CL^{\infty}(\Pi \otimes \nu)$; and by assumption, $DF \in \CL^{2}(\Pi \otimes \nu)$.
  A similar result holds for $G$, and we find that $DF DG$ is square integrable.
  By the Mecke formula, and \cref{equation:chain_rule_plus_square,equation:chain_rule_minus_square}, we can write:
  \begin{equation*}
    \begin{split}
      \esp_{\Pi} {[D(FG), u]}_{\Gamma} &= \esp_{\Pi} F {[DG, u]}_{\Gamma} + \esp_{\Pi} G {[DF, u]}_{\Gamma} \\
                                       &+ \frac{1}{2} \esp_{\Pi} \int D_{z}^{+}F \otimes DG \otimes u(z) \nu(\dd z) \\
                                       &- \frac{1}{2} \esp_{\Pi} \int (1-D_{z}^{-})F (1-D_{z}^{-})G (1-D_{z}^{-})u(z) \eta(\dd z).
    \end{split}
  \end{equation*}
  By the Mecke formula, the two terms on the two last lines cancel out.
  This proves the first part of the claim.
  To establish \cref{proposition:carre_du_champ}, we simply write, for $F$ and $\Phi \in \CA$:
  \begin{equation*}
    \cE(F, F \Phi) - \frac{1}{2} \cE(F^{2}, \Phi) = \esp_{\Pi} F {[DF, D\Phi]}_{\Gamma} + \esp_{\Pi} \Phi {[DF, DF]}_{\Gamma} - \esp_{\Pi} F {[DF, D\Phi]}_{\Gamma}.
  \end{equation*}
  This shows that $\dom \Gamma \supset \CA$ and that
  \begin{equation*}
    \Gammaa(F)[\Phi] = \esp_{\Pi} {[DF,DF]}_{\Gamma} \Phi.
  \end{equation*}
  We extend this expression to $\dom \cE = \dom D$.
  This concludes the proof.
\end{proof}

\printbibliography%
\end{document}